\title{Identities and estimations involving the least common multiple of strong divisibility sequences}
\author{\textsc{Sid Ali BOUSLA} and \textsc{Bakir FARHI} \\
Laboratoire de Mathématiques appliquées \\
Faculté des Sciences Exactes \\
Université de Bejaia, 06000 Bejaia, Algeria \\[1mm]
\href{mailto:bouslasidali@gmail.com}{bouslasidali@gmail.com} (S.A. Bousla),  \href{mailto:bakir.farhi@gmail.com}{bakir.farhi@gmail.com} (B. Farhi)
}
\date{}
\let\up=\textsuperscript
\def\N{{\mathbb N}}
\def\restmod#1#2{#1\ (\mathrm{mod}\ #2)}
\def\lcm{\mathrm{lcm}}
\def\EMdash{\leavevmode\hbox to 10.6mm{\vrule height .63ex depth -.59ex
    width 10mm\hfill}}
\theoremstyle{plain}
\numberwithin{equation}{section}
\newtheorem{thm}{Theorem}[section]
\newtheorem{lemma}[thm]{Lemma}
\newtheorem{coll}[thm]{Corollary}
\newtheorem{thm-n}{Theorem} 
\newtheorem{prop-n}[thm-n]{Proposition} 
\begin{document}
\maketitle

\vspace*{-10cm}

\fbox{\begin{minipage}{0.47\textwidth}
{To appear in \textit{C. R. Acad. Sci. Paris, Sér. I}}
\end{minipage}
}

\vspace*{9cm}

\begin{abstract}
In this paper, we first prove that for any strong divisibility sequences $\boldsymbol{a} = \left(a_n\right)_{n\geq 1}$, we have the identity: $\lcm\left\lbrace \binom{n}{0}_{\bf{a}},\binom{n}{1}_{\bf{a}},\dots,\binom{n}{n}_{\bf{a}}\right\rbrace=\frac{\lcm\left(a_1,\dots,a_n,a_{n+1}\right)}{a_{n+1}}$ $\left(\forall n\geq 1\right)$, generalizing the identity of Farhi (obtained in 2009 for $a_n=n$). Then, we derive from this one some other interesting identities. Finally, we apply those identities to estimate the least common multiple of the consecutive terms of some Lucas sequences. Denoting by $\left(F_n\right)_n$ the usual Fibonacci sequence, we prove for example that for all $n\geq 1$, we have 
\[\Phi^{\frac{n^2}{4}-\frac{9}{4}}\leq\lcm\left(F_1,\dots,F_n\right)\leq\Phi^{\frac{n^2}{3}+\frac{4n}{3}},\]
where $\Phi$ denotes the golden ratio.    
\end{abstract}
\noindent\textbf{MSC 2010:}  Primary 11A05, 11B83, 11B39; Secondary 11B65.\\
\textbf{Keywords:} Divisibility sequences, strong divisibility sequences, least common multiple, Lucas sequences, Fibonacci sequence.

\section{Introduction and Notation}
Throughout this paper, we let $\mathbb{N^*}$ denote the set $\mathbb{N}\setminus \left\lbrace 0\right\rbrace$ of positive integers. We denote by $\lfloor .\rfloor$ the integer-part function. If $a_1,a_2,\dots,a_n$ $(n\geq 1)$ are integers not all zero, we let $\gcd\left(a_1,\dots,a_n\right)$ and $\lcm\left(a_1,\dots,a_n\right)$ respectively denote the greatest common positive divisor and the least common positive multiple of $a_1,\dots,a_n$. A sequence of positive integers $\left(a_n\right)_{n\geq 1}$ is simply denoted by $\bf{a}$. Let $\bf{a}$ be a such sequence. For $n\in\mathbb{N}$, we let $[n]_{\bf{a}}!$ denote the positive integer 
\[[n]_{\bf{a}}!:=a_1a_2\cdots a_n\]  
(with the convention $[0]_{\bf{a}}!=1$). For $n,k\in\mathbb{N}$, with $n\geq k$, we let $\binom{n}{k}_{\bf{a}}$ denote the positive rational number 
\[\binom{n}{k}_{\bf{a}}:=\frac{a_na_{n-1}\cdots a_{n-k+1}}{a_1a_2\cdots a_k}=\frac{[n]_{\bf{a}}!}{[k]_{\bf{a}}![n-k]_{\bf{a}}!}.\]
We call those numbers the \emph{$\bf{a}$-binomial coefficients}. Note that the usual binomial coefficients are obtained by taking $a_n = n$ and the gaussian binomial coefficients $\binom{n}{k}_q$ (also called the \linebreak $q$-binomial coefficients) are obtained by taking $a_n = q^n - 1$ (where $q \geq 2$ is an integer). From the definition, we easily check that the $\bf{a}$-binomial coefficients satisfy the following identities:
\begin{equation}\label{1}
\binom{n}{k}_{\bf{a}}=\binom{n}{n-k}_{\bf{a}}~~~~\left(\forall n,k\in\mathbb{N}, n\geq k\right) ,
\end{equation} 
\begin{equation}\label{2}
a_k\binom{n+1}{k}_{\bf{a}}=a_{n+1}\binom{n}{k-1}_{\bf{a}}~~~~\left(\forall n,k\in\mathbb{N}, 1\leq k\leq n+1\right) ,
\end{equation}
\begin{equation}\label{3}
\binom{n}{k}_{\bf{a}}\binom{k}{l}_{\bf{a}}=\binom{n}{l}_{\bf{a}}\binom{n-l}{k-l}_{\bf{a}}~~~~\left(\forall n,k,l\in\mathbb{N}, l\leq k\leq n\right).
\end{equation}
A \emph{strong divisibility sequence} is a sequence of positive integers $\boldsymbol{a}=\left(a_n\right)_{n\geq 1}$ which satisfies for all $n,m\in\mathbb{N^*}$ the property:
\[\gcd\left(a_n,a_m\right)=a_{\gcd(n,m)}.\]
From this definition, it is immediate that any strong divisibility sequence ${(a_n)}_{n \geq 1}$ satisfies the weaker divisibility property:
$$
n \mid m \Longrightarrow a_n \mid a_m ~~~~~~~~~~ (\forall n , m \in \N^*) ,
$$
but the converse is not always true. If the last property is satisfied (but not necessarily the first), we just say that ${(a_n)}_{n \geq 1}$ is a \emph{divisibility sequence}. \\
The sequence of all positive integers $\left(a_n=n, \forall n\geq 1\right)$ is obviously a strong divisibility sequence. An important class of strong divisibility sequences is the Lucas sequences (with some constraints on their parameters). For $P,Q\in\mathbb{Z^*}$, the Lucas sequence $U\left(P,Q\right)$ is the sequence of integers defined by:
\[\begin{cases}U_0=0,~U_1=1\\U_{n+2}=PU_{n+1}-QU_{n}~~\left(\forall n\in\mathbb{N}\right)\end{cases}.\]
If $\Delta:=P^2-4Q>0$, we denote by $\alpha$ and $\beta$ the roots of the quadratic equation: $X^2-PX+Q=0$ such that $\left|\alpha\right|>\left|\beta\right|$. In fact, the sequence $U\left(P,Q\right)$ can be expressed in terms of $\alpha$ and $\beta$. We have the following so called Binet's formula: 
\begin{equation}\label{s}
U_n=\frac{\alpha^n-\beta^n}{\alpha-\beta}~~~~\left(\forall n\in\mathbb{N}\right).
\end{equation}
It is known that if $P$ and $Q$ are coprime then $\left|U\left(P,Q\right)\right|$ is a strong divisibility sequence (see e.g., \cite{div}). Note that those sequences include the sequence of all natural numbers (take $\left(P,Q\right)=\left(2,1\right)$) and also the usual Fibonacci sequence (take $\left(P,Q\right)=\left(1,-1\right)$). For a reading on the Lucas sequences, the reader can consult the book of Honsberger \cite{H}. The general structure of the divisibility sequences was the area of interest of several authors at least since the second half of the 20\up{th} century. In 1936, Ward \cite{Ward} investigated the $p$-adic valuation of such sequences and discovered the property that for any strong divisibility sequence $\boldsymbol{a}$, the $\boldsymbol{a}$-binomial coefficients are all positive integers. In \cite{bez}, Bézivin et al. established a complete characterization of divisibility sequences which are linear recurrent. In \cite{kim2}, Kimberling implicitly obtained an important theorem stating that the general term of a strong divisibility sequence $\boldsymbol{a}=\left(a_n\right)_{n\geq 1}$ can always be made in the form:
\begin{equation}\label{4}
a_{n}=\prod_{d\mid n}u_d~~~~~~~~~~\left(\forall n\in\mathbb{N^*}\right) ,
\end{equation}
for some sequence of positive integers $\left(u_{n}\right)_{n\geq 1}$. From the Möbius inversion formula, we have that 
\[u_n=\prod_{d\mid n}a_d^{\mu\left(\frac{n}{d}\right)}~~~~\left(\forall n\in\mathbb{N^*}\right),\]
where $\mu$ is the well-known Möbius function. More recently, Bliss et al. \cite{Bliss} explicitly rediscovered this theorem and established another important formula of $\left(u_n\right)_{n\geq 1}$ in terms of $\left(a_n\right)_{n\geq 1}$. Using the representation \eqref{4} for the particular case $a_n = q^n - 1$ (where $q \geq 2$ is an integer), Knuth and Wilf showed an important formula for the gaussian binomial coefficients $\binom{n}{k}_q$ (see \cite[Equation (10)]{Knuth}). Actually the Kunth-Wilf formula can be easily generalized to the case of any strong divisibility sequence, as shows the following proposition:
\begin{prop-n}\label{p1}
Let $\left(u_n\right)_{n\geq 1}$ be a sequence of positive integers and $\left(a_n\right)_{n\geq 1}$ be the sequence (of positive integers) whose the general term is given by:
\[a_n=\prod_{d\mid n}u_d~~~~\left(\forall n\in\mathbb{N^*}\right).\]
Then, for all $n,k\in\mathbb{N}$, with $n\geq k$, we have
\[\binom{n}{k}_{\bf{a}}=\prod_{d}u_d,\]
where the product on the right-hand side is taken over all positive integers $d\leq n$ such that:
\[\left\lfloor \frac{k}{d}\right\rfloor +\left\lfloor \frac{n-k}{d}\right\rfloor <\left\lfloor \frac{n}{d}\right\rfloor.\]
In particular, the $\bf{a}$-binomial coefficients $\binom{n}{k}_{\bf{a}}$ $\left(n,k\in\mathbb{N}, n \geq k\right)$ are all positive integers. 
\end{prop-n}
\noindent\textit{Proof.} Let $n , k \in \N$ such that $n \geq k$. We have
$$
\binom{n}{k}_{\bf{a}} := \dfrac{\prod_{n - k < m \leq n} a_m}{\prod_{1 \leq m \leq k} a_m} = \dfrac{\prod_{n - k < m \leq n} \prod_{d \mid m} u_d}{\prod_{1 \leq m \leq k} \prod_{d \mid m} u_d} .
$$
But since
$$
\prod_{n - k < m \leq n} \prod_{d \mid m} u_d = \prod_{1 \leq d \leq n} \prod_{\begin{subarray}{c}
n - k < m \leq n \\
m \equiv \restmod{0}{d}
\end{subarray}} \!\!\! u_d = \prod_{1 \leq d \leq n} u_d^{\lfloor\frac{n}{d}\rfloor - \lfloor\frac{n - k}{d}\rfloor} = \prod_{d \geq 1} u_d^{\lfloor\frac{n}{d}\rfloor - \lfloor\frac{n - k}{d}\rfloor}
$$
and similarly
$$
\prod_{1 \leq m \leq k} \prod_{d \mid m} u_d = \prod_{1 \leq d \leq k} \prod_{\begin{subarray}{c}
1 \leq m \leq k \\
m \equiv \restmod{0}{d}
\end{subarray}} \!\!\! u_d = \prod_{1 \leq d \leq k} u_d^{\lfloor\frac{k}{d}\rfloor} = \prod_{d \geq 1} u_d^{\lfloor\frac{k}{d}\rfloor} ,
$$
then it follows that:
$$
\binom{n}{k}_{\bf{a}} = \prod_{d \geq 1} u_d^{\lfloor\frac{n}{d}\rfloor - \lfloor\frac{k}{d}\rfloor - \lfloor\frac{n - k}{d}\rfloor} .
$$
The required identity of the proposition follows from the fact that:
\begin{equation}
\left\lfloor\frac{n}{d}\right\rfloor - \left\lfloor\frac{k}{d}\right\rfloor - \left\lfloor\frac{n - k}{d}\right\rfloor \in \{0 , 1\} ~~~~~~~~~~ (\forall d \geq 1) . \tag*{$\square$}
\end{equation}

About the converse of the Kimberling representation \eqref{4}, it is known that not any sequence $\bf{a}$ whose general term has the form \eqref{4} is a strong divisibility sequence. In \cite{Bliss}, Bliss et al. successed to establish a necessary and sufficient condition on a sequence $\bf{u}$ so that the sequence $\bf{a}$ defined by \eqref{4} be a strong divisibility sequence. This condition is ``$\forall n,m\in\mathbb{N^*}$ such that $n\nmid m$ and $m\nmid n$: $\gcd\left(u_n,u_m\right)=1$''. More recently, Nowicki \cite{Nowicki} has expanded the above condition of Bliss et al. and obtained a practical necessary and sufficient condition for a sequence of positive integers to be a strong divisibility sequence. This result is the following:
\begin{thm-n}[Nowicki \cite{Nowicki}]\label{p2}
Let $\left(a_n\right)_{n\geq 1}$ be a sequence of positive integers. For all $n\geq 1$, let $c_n$ be the positive integer defined by:
\[c_n:=\frac{\lcm\left(a_1,\dots,a_n\right)}{\lcm\left(a_1,\dots,a_{n-1}\right)}\] 
(with the convention $c_1=a_1$). Then $\left(a_n\right)_n$ is a strong divisibility sequence if and only if we have for all $n\geq 1$:
\[a_n=\prod_{d\mid n}c_d.\]
\end{thm-n}
The arithmetic properties of the binomial coefficients was an old and fascinating subject of study of several authors. For example, Sylvester \cite{syl} proved more than one century ago that if $n$ and $k$ are positive integers such that $n\geq 2k$, then the binomial coefficient $\binom{n}{k}$ contains at least a prime divisor greater than $k$. Recently, Farhi \cite{Farhi} proved the remarkable and interesting identity $\lcm\left\lbrace \binom{n}{0},\binom{n}{1},\dots,\binom{n}{n}\right\rbrace=\frac{\lcm\left(1,2,\dots,n,n+1\right)}{n+1}$ $\left(\forall n\in\mathbb{N}\right)$. This identity was then generalized by Guo \cite{Victor} to $q$-binomial coefficients. In this paper, we obtain a more general identity dealing with the strong divisibility sequence from which Farhi's and Guo's identities become just particular cases (see theorem \ref{R1} and the remarks at the end of the paper). After that, we deduce two other identities that we will use successfully to obtain nontrivial effective estimations for the least common multiple of the consecutive terms of some Lucas sequences (see theorem \ref{R4}). The goodness of our effective estimations is insured by the asymptotic estimations obtained respectively by Matiyasevich and Guy \cite{Mat}, Kiss and Matyas \cite{kiss} for the least common multiple of the same type of sequences.         
\section{The results and the proofs}
Our principal result is the following:
\begin{thm}\label{R1}
Let $\boldsymbol{a}=\left(a_n\right)_{n\geq 1}$ be a strong divisibility sequence. Then, for any non-negative integer $n$, we have:
\begin{equation}\label{rr1}
\lcm\left\lbrace \binom{n}{0}_{\bf{a}},\binom{n}{1}_{\bf{a}},\dots,\binom{n}{n}_{\bf{a}}\right\rbrace=\frac{\lcm\left(a_1,a_2,\dots,a_n,a_{n+1}\right)}{a_{n+1}}.
\end{equation}
\end{thm}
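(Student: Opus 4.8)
The plan is to fix a prime $p$ and prove $v_p(\mathrm{LHS}) = v_p(\mathrm{RHS})$ for every $p$, which is equivalent to the asserted identity between positive rationals. First I would exploit Nowicki's Theorem \ref{p2}: since $\boldsymbol a$ is a strong divisibility sequence, $a_n = \prod_{d\mid n} c_d$ with $c_n = \lcm(a_1,\dots,a_n)/\lcm(a_1,\dots,a_{n-1})$, so putting $w_d := v_p(c_d)\ge 0$ gives $v_p(a_n) = \sum_{d\mid n} w_d$. Telescoping the definition of $c_n$ yields $\lcm(a_1,\dots,a_m) = \prod_{d=1}^m c_d$, whence $v_p(\lcm(a_1,\dots,a_m)) = \sum_{d=1}^m w_d$. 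In particular $v_p(\mathrm{RHS}) = \sum_{d=1}^{n+1} w_d - \sum_{d\mid(n+1)} w_d = \sum_{\substack{1\le d\le n+1\\ d\nmid(n+1)}} w_d$. Moreover, writing $\boldsymbol a$ in the form \eqref{4} with $u_d = c_d$ lets me apply Proposition \ref{p1} with precisely these $u_d$.

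For the bound $v_p(\mathrm{LHS}) \le v_p(\mathrm{RHS})$ I would invoke Proposition \ref{p1}, which gives $v_p\binom nk_{\boldsymbol a} = \sum_{d\ge 1}\varepsilon_d(n,k)\,w_d$ with $\varepsilon_d(n,k) := \lfloor n/d\rfloor - \lfloor k/d\rfloor - \lfloor(n-k)/d\rfloor \in \{0,1\}$. The crux is the observation that $\varepsilon_d(n,k) = 0$ whenever $d\mid(n+1)$: indeed $\varepsilon_d(n,k)=1$ forces $(k\bmod d)+((n-k)\bmod d)\ge d$, while $d\mid(n+1)$ makes this sum $\equiv -1\pmod d$, so it equals $d-1 < d$ (the only other admissible residue representative $2d-1$ exceeds the maximum $2d-2$). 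Since all $w_d\ge 0$, discarding the divisors of $n+1$ and bounding the remaining $\varepsilon_d$ by $1$ yields $v_p\binom nk_{\boldsymbol a} \le \sum_{d\le n+1,\, d\nmid(n+1)} w_d = v_p(\mathrm{RHS})$ for every $k$, hence $v_p(\mathrm{LHS}) = \max_k v_p\binom nk_{\boldsymbol a} \le v_p(\mathrm{RHS})$.

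For the reverse inequality I would exhibit a single $\boldsymbol a$-binomial coefficient attaining the bound, using identity \eqref{2} rather than further floor manipulations. Choosing $j_0\in\{1,\dots,n+1\}$ with $v_p(a_{j_0}) = \max_{1\le j\le n+1} v_p(a_j)$ and applying \eqref{2} with $k=j_0$ gives $a_{j_0}\binom{n+1}{j_0}_{\boldsymbol a} = a_{n+1}\binom{n}{j_0-1}_{\boldsymbol a}$, so that $v_p\binom{n}{j_0-1}_{\boldsymbol a} = v_p(a_{j_0}) + v_p\binom{n+1}{j_0}_{\boldsymbol a} - v_p(a_{n+1}) \ge \max_j v_p(a_j) - v_p(a_{n+1}) = v_p(\mathrm{RHS})$, the inequality using that $\binom{n+1}{j_0}_{\boldsymbol a}$ is a nonnegative integer by Proposition \ref{p1}. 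Combining the two inequalities gives $v_p(\mathrm{LHS}) = v_p(\mathrm{RHS})$ for all $p$, and hence the identity.

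I expect the main obstacle to be the upper bound, and specifically the clean identification of which factors survive: the content of the theorem is that dividing by $a_{n+1}$ on the right cancels exactly the contributions of the divisors of $n+1$, and the vanishing $\varepsilon_d(n,k)=0$ for $d\mid(n+1)$ is the combinatorial mechanism realizing this cancellation uniformly in $k$. Everything else reduces to bookkeeping on the valuations $w_d$.
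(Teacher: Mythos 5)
Your proof is correct and follows essentially the same route as the paper's: Nowicki's representation together with Proposition \ref{p1} and the vanishing of $\lfloor n/d\rfloor - \lfloor k/d\rfloor - \lfloor (n-k)/d\rfloor$ whenever $d \mid (n+1)$ (the half of Lemma \ref{g} the paper also relies on) give one inequality, and identity \eqref{2} applied to a suitable index gives the other. The only differences are cosmetic: you recast the two divisibility relations as $p$-adic valuation inequalities, prove the needed half of Lemma \ref{g} inline instead of citing it, and localize the paper's final lcm argument to the single index $j_0$ maximizing $v_p(a_j)$.
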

To prove this theorem, we need the following lemma of Guo \cite{Victor}:
\begin{lemma}[Guo \cite{Victor}]\label{g}
Let $n$ and $d$ be two positive integers with $n\geq d$. Then, the two following properties are equivalent:
\begin{enumerate}
\item There exists $k\in \left\lbrace 0,1,\dots,n\right\rbrace$ such that: $\left\lfloor \frac{k}{d} \right\rfloor +\left\lfloor \frac{n-k}{d} \right\rfloor < \left\lfloor \frac{n}{d}\right\rfloor$. 
\item The positive integer $d$ does not divide $(n+1)$.
\end{enumerate}
\end{lemma}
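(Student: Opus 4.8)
The plan is to reduce both statements to a single condition on the residue of $n$ modulo $d$. The starting point is the elementary three-term floor inequality: for all real $x,y$ one has $\lfloor x\rfloor+\lfloor y\rfloor\le\lfloor x+y\rfloor\le\lfloor x\rfloor+\lfloor y\rfloor+1$. Applying this with $x=\frac{k}{d}$ and $y=\frac{n-k}{d}$ (so that $x+y=\frac{n}{d}$) shows that for every $k\in\{0,1,\dots,n\}$ the integer
\[\delta_k:=\left\lfloor\frac{n}{d}\right\rfloor-\left\lfloor\frac{k}{d}\right\rfloor-\left\lfloor\frac{n-k}{d}\right\rfloor\]
belongs to $\{0,1\}$. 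Thus property (1) is precisely the assertion that $\delta_k=1$ for at least one admissible $k$, and the whole lemma becomes a question of deciding for which pairs $(n,d)$ such a $k$ exists.

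The key step is to characterize $\delta_k$ in terms of Euclidean remainders. Writing $n=d\lfloor n/d\rfloor+R$ and $k=d\lfloor k/d\rfloor+r$ with $0\le r,R\le d-1$, a direct computation of $\lfloor(n-k)/d\rfloor$ (splitting according to whether $r\le R$ or $r>R$, and using $0\le k\le n$ to control both the sign of $n-k$ and the inequality $\lfloor k/d\rfloor\le\lfloor n/d\rfloor$) yields the clean dichotomy
\[\delta_k=\begin{cases}0 & \text{if } r\le R,\\ 1 & \text{if } r>R.\end{cases}\]
In words, $\delta_k=1$ holds exactly when the remainder of $k$ modulo $d$ strictly exceeds the remainder $R$ of $n$ modulo $d$. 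I expect this remainder dichotomy to be the main point of the argument; everything afterwards is a counting statement about residues, and the only subtlety is to keep track of the borrow in the division of $n-k$.

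With this reformulation in hand, property (1) is equivalent to the existence of some $k\in\{0,\dots,n\}$ whose remainder modulo $d$ exceeds $R$. Here I would invoke the hypothesis $n\ge d$: it guarantees $\{0,1,\dots,d-1\}\subseteq\{0,\dots,n\}$, so every residue class modulo $d$ is already represented among the admissible $k$. Consequently such a $k$ exists if and only if there is a residue strictly larger than $R$, that is, if and only if $R\le d-2$, equivalently $R\ne d-1$. Finally I would translate this back: $R=d-1$ means $n\equiv-1\pmod d$, i.e.\ $d\mid(n+1)$, so the condition $R\ne d-1$ is exactly property (2), namely $d\nmid(n+1)$. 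Chaining the three equivalences closes the loop and establishes the lemma.
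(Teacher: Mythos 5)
Your proof is correct. There is, however, nothing in the paper to compare it against: the lemma is stated as a quoted result of Guo \cite{Victor} and the paper defers its proof entirely to Guo's article, so your proposal supplies a self-contained elementary argument that the paper omits. The three steps all check out. The bound $\delta_k\in\{0,1\}$ follows from $\lfloor x\rfloor+\lfloor y\rfloor\le\lfloor x+y\rfloor\le\lfloor x\rfloor+\lfloor y\rfloor+1$; the dichotomy $\delta_k=1\iff (k\bmod d)>(n\bmod d)$ is exactly right (in the case $r>R$ one writes $n-k=d\left(\lfloor n/d\rfloor-\lfloor k/d\rfloor-1\right)+(d+R-r)$ with $d+R-r\in\{1,\dots,d-1\}$, so the floor drops by one); and the hypothesis $n\ge d$ is invoked at precisely the point where it is needed, namely to guarantee that every residue class modulo $d$ occurs among $k\in\{0,\dots,n\}$, which makes property (1) equivalent to $n\bmod d\ne d-1$, i.e.\ to $d\nmid(n+1)$. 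Note that without $n\ge d$ the equivalence genuinely fails (e.g.\ $n=1$, $d=3$ satisfies (2) but not (1)), so your explicit use of the hypothesis is not mere caution but the crux of the direction (2)$\Rightarrow$(1); the converse direction, as your argument implicitly shows, needs no such hypothesis. The only criticism is presentational: the remainder dichotomy is asserted after ``a direct computation'' rather than written out, and since it is the heart of the lemma, displaying the two-line division-with-remainder argument explicitly would make the proof fully rigorous as it stands.
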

\begin{proof}[Proof of Theorem \ref{R1}]
Let $n\in\mathbb{N}$ be fixed. For $n=0$, the identity of the theorem is trivial. Suppose for the sequel that $n\geq 1$. Let $A_n$ and $B_n$ respectively denote the left-hand side and the right-hand side of \eqref{rr1}. So, we have to show that $A_n=B_n$. To do so, we first show that $A_n$ divides $B_n$ and then that $B_n$ divides $A_n$. Since $\bf{a}$ is a strong divisibility sequence then, according to Theorem \ref{p2}, we have for any $m\geq 1$:
\[a_m=\prod_{d\mid m}u_d,\]
where $\left(u_d\right)_{d\geq 1}$ is the sequence of positive integers defined by:
\[u_1:=a_1~~\text{and}~~u_d:=\frac{\lcm\left(a_1,\dots,a_d\right)}{\lcm\left(a_1,\dots,a_{d-1}\right)}~~~~\left(\forall d\geq 2\right).\]
From this definition of $\left(u_d\right)_{d}$, it is immediate that:
\[\lcm\left(a_1,\dots,a_{m}\right)=\prod_{d=1}^{m}u_d~~~~\left(\forall m\geq 1\right).\]
Now, for any $k\in\left\lbrace 0,1,\dots,n\right\rbrace$, the $u_d$'s appearing in the product $\binom{n}{k}_{\bf{a}}=\prod_{d}u_d$ of Proposition \ref{p1} correspond (according to Lemma \ref{g}) to indices $d$ such that $1\leq d\leq n$ and $d\nmid (n+1)$. This implies that the product:
\[\prod_{\begin{subarray}{c}1\leq d\leq n \\ d\nmid (n+1) \end{subarray}}u_d=\frac{\displaystyle\prod_{1\leq d\leq n+1}u_d}{\displaystyle\prod_{d\mid (n+1)}u_d}=\frac{\lcm\left(a_1,\dots,a_{n},a_{n+1}\right)}{a_{n+1}}=B_n\]
is a multiple of each $\binom{n}{k}_{\bf{a}}$ $\left(0\leq k\leq n\right)$. Thus $B_n$ is a multiple of $\lcm\left\lbrace \binom{n}{0}_{\bf{a}},\binom{n}{1}_{\bf{a}},\dots,\binom{n}{n}_{\bf{a}}\right\rbrace=A_n$; that is $A_n\mid B_n$.\\
Next, it is immediate that $\lcm\left(a_{1},\dots,a_{n+1}\right)$ divides $\lcm\left\lbrace a_1\binom{n+1}{1}_{\bf{a}},a_2\binom{n+1}{2}_{\bf{a}},\dots,a_{n+1}\binom{n+1}{n+1}_{\bf{a}}\right\rbrace$, which is (according to \eqref{2}) equal to
\[\lcm\left\lbrace a_{n+1}\binom{n}{0}_{\bf{a}},a_{n+1}\binom{n}{1}_{\bf{a}},\dots,a_{n+1}\binom{n}{n}_{\bf{a}}\right\rbrace=a_{n+1}\lcm\left\lbrace\binom{n}{0}_{\bf{a}},\binom{n}{1}_{\bf{a}},\dots,\binom{n}{n}_{\bf{a}}\right\rbrace=a_{n+1}A_{n}.\]
Hence $\frac{\lcm\left(a_{1},\dots,a_n,a_{n+1}\right)}{a_{n+1}}=B_n$ divides $A_n$.
This completes the proof.
\end{proof}
\noindent\textbf{Remark:} The particular case of the identity of Theorem \ref{R1} corresponding to the sequence of all positive integers (that is $a_n=n$, $\forall n\geq 1$) is the main result of the paper \cite{Farhi} of Farhi.

From Theorem \ref{R1}, we derive two important corollaries:
\begin{coll}\label{R2}
Let $\boldsymbol{a}=\left(a_n\right)_{n\geq 1}$ be a strong divisibility sequence. Then, for any positive integer $n$, we have:
\[\lcm\left(a_1,a_2,\dots,a_n\right)=\lcm\left\lbrace a_1\binom{n}{1}_{\bf{a}},\dots,a_n\binom{n}{n}_{\bf{a}}\right\rbrace.\] 
\end{coll}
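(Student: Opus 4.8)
The plan is to transform the right-hand side into a single scalar multiple of an lcm to which Theorem \ref{R1}, applied at level $n-1$, directly applies. The crucial ingredient is the identity \eqref{2}: writing it with $n$ in place of $n+1$, it reads $a_k\binom{n}{k}_{\bf{a}}=a_n\binom{n-1}{k-1}_{\bf{a}}$ for every $k$ with $1\leq k\leq n$. This is exactly the tool that lets me shift the upper index of the $\bf{a}$-binomial coefficients from $n$ down to $n-1$ while pulling out a uniform factor $a_n$.

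First I would use this to rewrite each argument on the right-hand side: as $k$ runs through $\left\lbrace 1,2,\dots,n\right\rbrace$, the term $a_k\binom{n}{k}_{\bf{a}}$ equals $a_n\binom{n-1}{k-1}_{\bf{a}}$, and the new lower index $k-1$ runs through $\left\lbrace 0,1,\dots,n-1\right\rbrace$, covering exactly the full row of order $n-1$. Hence
\[\lcm\left\lbrace a_1\binom{n}{1}_{\bf{a}},\dots,a_n\binom{n}{n}_{\bf{a}}\right\rbrace = \lcm\left\lbrace a_n\binom{n-1}{0}_{\bf{a}},\dots,a_n\binom{n-1}{n-1}_{\bf{a}}\right\rbrace = a_n\,\lcm\left\lbrace\binom{n-1}{0}_{\bf{a}},\dots,\binom{n-1}{n-1}_{\bf{a}}\right\rbrace,\]
where the last step factors the positive integer $a_n$, common to all arguments, out of the lcm. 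Next I would invoke Theorem \ref{R1} with $n$ replaced by $n-1$ (valid since $n-1\geq 0$), which gives $\lcm\left\lbrace\binom{n-1}{0}_{\bf{a}},\dots,\binom{n-1}{n-1}_{\bf{a}}\right\rbrace = \frac{\lcm\left(a_1,\dots,a_n\right)}{a_n}$. Multiplying this by $a_n$ yields $\lcm\left(a_1,\dots,a_n\right)$, which is precisely the left-hand side, completing the argument.

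I do not anticipate a serious obstacle: the argument is short and essentially a bookkeeping reduction to the already-proven theorem. The only points demanding care are that factoring $a_n$ out of the lcm is legitimate because $a_n$ is a positive integer common to every argument, and that the index shift $k\mapsto k-1$ turns the range $1\leq k\leq n$ into exactly $0\leq k-1\leq n-1$, so Theorem \ref{R1} is applicable with no missing or leftover terms. It is worth checking the boundary case $n=1$ separately: there both sides reduce to $a_1$, and the general computation specializes correctly since $\binom{0}{0}_{\bf{a}}=1$.
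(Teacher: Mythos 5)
Your proof is correct and is essentially identical to the paper's own argument: both rewrite each term $a_k\binom{n}{k}_{\bf{a}}$ as $a_n\binom{n-1}{k-1}_{\bf{a}}$ via identity \eqref{2}, factor the common positive integer $a_n$ out of the lcm, and then apply Theorem \ref{R1} at level $n-1$.
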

\begin{proof}
For any positive integer $n$, we have according to Formula \eqref{2}:
\begin{align*}
\lcm\left\lbrace a_1\binom{n}{1}_{\bf{a}},\dots,a_{n}\binom{n}{n}_{\bf{a}}\right\rbrace &=\lcm\left\lbrace a_{n}\binom{n-1}{0}_{\bf{a}},a_{n}\binom{n-1}{1}_{\bf{a}},\dots,a_{n}\binom{n-1}{n-1}_{\bf{a}}\right\rbrace\\ &=a_{n}\lcm\left\lbrace \binom{n-1}{0}_{\bf{a}},\binom{n-1}{1}_{\bf{a}},\dots,\binom{n-1}{n-1}_{\bf{a}}\right\rbrace \\&=\lcm\left(a_{1},\dots,a_{n}\right)~~~~\text{(by Theorem \ref{R1}),}
\end{align*}
as required.
\end{proof}
\noindent\textbf{Remark:} The particular case of the identity of Corollary \ref{R2} corresponding to the sequence of all positive integers (that is $a_n=n$, $\forall n\geq 1$) is already obtained by Nair in \cite{Nair}.
\begin{coll}\label{R3}
Let $\boldsymbol{a}=\left(a_n\right)_{n\geq 1}$ be a strong divisibility sequence. Then, for any positive integer $n$, we have:
\[\lcm\left(a_1,a_2,\dots,a_n\right)=\gcd\left\lbrace \binom{n}{k}_{\bf{a}}\lcm\left(a_1,\dots,a_k\right);~n/2\leq k\leq n \right\rbrace.\]
\end{coll}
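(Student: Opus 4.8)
The plan is to establish the two divisibility relations between $L_n:=\lcm(a_1,\dots,a_n)$ and the right-hand side $G_n:=\gcd\left\lbrace\binom{n}{k}_{\bf a}\lcm(a_1,\dots,a_k):n/2\le k\le n\right\rbrace$, since an equality of positive integers is equivalent to mutual divisibility.

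First I would dispose of the easy inclusion $G_n\mid L_n$. Because $\binom{n}{n}_{\bf a}=1$, the value of the bracketed quantity at $k=n$ is exactly $\binom{n}{n}_{\bf a}\lcm(a_1,\dots,a_n)=L_n$. Since a gcd always divides each member of the set over which it is taken, $G_n\mid L_n$ is immediate.

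The substantive direction is $L_n\mid G_n$, which amounts to showing that $L_n$ divides $\binom{n}{k}_{\bf a}\lcm(a_1,\dots,a_k)$ for every integer $k$ with $n/2\le k\le n$. Here I would recycle the structural description used in the proof of Theorem \ref{R1}: by Theorem \ref{p2} write $a_m=\prod_{d\mid m}u_d$, so that $\lcm(a_1,\dots,a_m)=\prod_{d=1}^{m}u_d$, and by Proposition \ref{p1} write $\binom{n}{k}_{\bf a}=\prod_{d}u_d$, the product being over those $d$ for which $c_d:=\lfloor n/d\rfloor-\lfloor k/d\rfloor-\lfloor(n-k)/d\rfloor$ equals $1$ (recall $c_d\in\{0,1\}$). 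Consequently $\binom{n}{k}_{\bf a}\lcm(a_1,\dots,a_k)$ is a product of the $u_d$ in which each index $d$ occurs with total multiplicity $c_d+\varepsilon_d$, where $\varepsilon_d=1$ if $d\le k$ and $\varepsilon_d=0$ otherwise; meanwhile $L_n=\prod_{d=1}^{n}u_d$ uses each $d\in\{1,\dots,n\}$ exactly once. Thus the divisibility will follow by cancellation (a product $\prod_{d\in M}u_d$ divides $\prod_{d\in M'}u_d$ whenever the multiset $M$ is contained in $M'$) as soon as I check that $c_d+\varepsilon_d\ge 1$ for every $d$ with $1\le d\le n$.

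The remaining point, and the one where the hypothesis $k\ge n/2$ is genuinely used, is this floor inequality. If $d\le k$ then $\varepsilon_d=1$ and $c_d\ge0$, so the bound is clear. If instead $k<d\le n$, then $\varepsilon_d=0$, and the key observation is that $k\ge n/2$ forces $n-k\le n/2\le k<d$, whence $\lfloor k/d\rfloor=\lfloor(n-k)/d\rfloor=0$ while $\lfloor n/d\rfloor\ge1$ because $d\le n$; therefore $c_d=\lfloor n/d\rfloor\ge1$. I expect this second case to be the only real obstacle: everything hinges on $k\ge n/2$ guaranteeing $n-k<d$, which prevents the subtracted floor terms from lowering $\lfloor n/d\rfloor$. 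Once this inequality is in hand, the multiset-containment argument yields $L_n\mid G_n$, and combined with the first paragraph gives $G_n=L_n$, as claimed.
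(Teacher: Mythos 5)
Your proof is correct, but it takes a genuinely different route from the paper's. The paper deduces Corollary \ref{R3} from the results already established: for $n/2\leq k\leq n$ and $1\leq l\leq k$ it uses identity \eqref{3} to show that $a_l\binom{n}{l}_{\bf{a}}$ divides $a_l\binom{k}{l}_{\bf{a}}\binom{n}{k}_{\bf{a}}$, hence divides $\binom{n}{k}_{\bf{a}}\lcm\left(a_1,\dots,a_k\right)$ by Corollary \ref{R2}; it then extends this to $k<l\leq n$ via the reflection $l\mapsto n-l+1$ (which is where the constraint $k\geq n/2$ enters, ensuring $n-l+1\leq n-k\leq k$), and concludes with Lemma \ref{ccc1} and a second application of Corollary \ref{R2}. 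You instead bypass Corollary \ref{R2} and identity \eqref{3} entirely and go back to the machinery underlying Theorem \ref{R1}: the Nowicki factorization $a_m=\prod_{d\mid m}u_d$ of Theorem \ref{p2} (with $u_d$ the $\lcm$-quotients, so that $\lcm\left(a_1,\dots,a_m\right)=\prod_{d=1}^{m}u_d$ telescopes) together with the exponent formula of Proposition \ref{p1}, reducing the whole statement to the elementary inequality $\left\lfloor n/d\right\rfloor-\left\lfloor k/d\right\rfloor-\left\lfloor (n-k)/d\right\rfloor+\varepsilon_d\geq 1$ for $1\leq d\leq n$; in your argument the hypothesis $k\geq n/2$ surfaces as $n-k\leq k<d$, which annihilates both subtracted floors, and your case analysis there is correct, as is the easy direction via the member $k=n$ of the set. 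What the paper's route buys is economy: it stays at the level of divisibility identities among ${\bf a}$-binomial coefficients and reuses Corollary \ref{R2} as a black box. What your route buys is transparency and self-containedness: it exhibits exactly which prime-free structural data ($u_d$-exponents) make the identity true, makes the role of the constraint $k\geq n/2$ visible in a single line, and in effect gives a proof of Corollary \ref{R3} independent of Corollaries \ref{R2} and \ref{R3}'s intermediate apparatus (only a trivial notational caution: your $c_d$ for the floor expression clashes with the $c_n$ of Theorem \ref{p2}).
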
 
To give a more proper proof of Corollary \ref{R3}, we shall first prove the following elementary lemma:
\begin{lemma}\label{ccc1}
Let $n$ and $m$ be two positive integers. Let also $a_{1},\dots,a_{n}$, $b_{1},\dots,b_{m}$ be positive integers. Then the property saying that $a_i$ divides $b_j$ for all $1\leq i\leq n$, $1\leq j\leq m$ is equivalent to the property saying that $\lcm\left(a_1,\dots,a_n\right)$ divides $\gcd\left(b_1,\dots,b_m\right)$.
\end{lemma}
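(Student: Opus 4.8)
The plan is to prove the equivalence directly from the two universal (extremal) properties of the least common multiple and the greatest common divisor, namely: the $\lcm$ of a finite family is the smallest positive integer divisible by every member of the family (so that every common multiple of the family is a multiple of it), and dually the $\gcd$ is the largest positive integer dividing every member (so that every common divisor of the family divides it). Writing $L := \lcm(a_1, \dots, a_n)$ and $G := \gcd(b_1, \dots, b_m)$, I would establish the two implications of the equivalence separately.

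For the forward direction, I would assume that $a_i$ divides $b_j$ for all admissible $i, j$. Fixing an index $j \in \{1, \dots, m\}$, the hypothesis says precisely that $b_j$ is a common multiple of $a_1, \dots, a_n$, whence $L \mid b_j$ by the minimality property of $L$. Since this holds for every $j$, the integer $L$ is a common divisor of $b_1, \dots, b_m$, and therefore $L \mid G$ by the maximality property of $G$.

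For the reverse direction, I would assume $L \mid G$ and simply chain divisibilities: for arbitrary indices $i$ and $j$ one has $a_i \mid L$ (as $L$ is a common multiple of the $a_i$'s), $L \mid G$ (by assumption), and $G \mid b_j$ (as $G$ is a common divisor of the $b_j$'s), so the transitivity of divisibility yields $a_i \mid b_j$, which is the desired conclusion.

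There is no genuine obstacle here; the statement is a formal consequence of the defining extremal properties of $\lcm$ and $\gcd$, and the only thing to be careful about is invoking them in the correct order (minimality of $\lcm$ to absorb the $a_i$'s, then maximality of $\gcd$ to absorb the $b_j$'s). If one preferred an alternative route, the whole equivalence can also be read off from $p$-adic valuations, since $v_p(L) = \max_i v_p(a_i)$ and $v_p(G) = \min_j v_p(b_j)$, and the condition $v_p(a_i) \leq v_p(b_j)$ for all $i, j$ is visibly the same as $\max_i v_p(a_i) \leq \min_j v_p(b_j)$; but the direct argument above is shorter and avoids reducing to a prime-by-prime analysis.
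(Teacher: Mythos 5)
Your proof is correct and takes essentially the same approach as the paper: both arguments rest entirely on the universal properties of $\lcm$ and $\gcd$ (every common multiple of the $a_i$'s is a multiple of $\lcm(a_1,\dots,a_n)$; every common divisor of the $b_j$'s divides $\gcd(b_1,\dots,b_m)$). The paper packages this as a single chain of equivalences, absorbing the $b_j$'s into the $\gcd$ first and then the $a_i$'s into the $\lcm$, whereas you prove the two implications separately and absorb in the opposite order, using transitivity of divisibility for the converse; the difference is purely cosmetic.
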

\begin{proof}
The property ``$a_i\mid b_j$, $\forall i=1,\dots,n$, $\forall j=1,\dots,m$" is equivalent to say that \linebreak ``$a_{i}\mid \gcd\left(b_{1},\dots,b_{m}\right)$, $\forall i=1,\dots,n$"; that is ``$\gcd\left(b_{1},\dots,b_{m}\right)$ is a multiple of each $a_i$ \linebreak $\left(1\leq i\leq n\right)$", which is equivalent to say that ``$\gcd\left(b_{1},\dots,b_{m}\right)$ is a multiple of $\lcm\left(a_{1},\dots,a_{n}\right)$", as required.
\end{proof}

\begin{proof}[Proof of Corollary \ref{R3}]
Let $n$ be a fixed positive integer. For $k,l \in \N$ such that $n/2\leq k\leq n$ and $1\leq l\leq k$, the positive integer $a_{l}\binom{n}{l}_{\bf{a}}$ obviously divides the positive integer $a_{l}\binom{n}{l}_{\bf{a}}\binom{n-l}{k-l}_{\bf{a}}$, which is (according to Formula \eqref{3}) equal to $a_{l}\binom{k}{l}_{\bf{a}}\binom{n}{k}_{\bf{a}}$. Next, the latter positive integer $a_{l}\binom{k}{l}_{\bf{a}}\binom{n}{k}_{\bf{a}}$ obviously divides the positive integer $\binom{n}{k}_{\bf{a}}\lcm\left\lbrace a_{i}\binom{k}{i}_{\bf{a}};~i=1,\dots,k\right\rbrace$, which is (according to Corollary \ref{R2}) equal to $\binom{n}{k}_{\bf{a}}\lcm\left(a_{1},\dots,a_{k}\right)$. Consequently, for all $k , l \in \N$ such that $n/2 \leq k \leq n$ and $1\leq l\leq k$, we have:
\begin{equation}\label{ccc2}
a_{l}\binom{n}{l}_{\bf{a}}~\text{divides}~\binom{n}{k}_{\bf{a}}\lcm\left(a_{1},\dots,a_{k}\right) .
\end{equation}
We state that \eqref{ccc2} holds even if $n/2\leq k\leq n$ and $k<l\leq n$. Indeed, if $k,l\in \N$ such that $n/2\leq k\leq n$ and $k+1\leq l\leq n$, we have that $1\leq n-l+1\leq n-k\leq k$ and $a_{n-l+1}\binom{n}{n-l+1}_{\bf{a}}=a_{l}\binom{n}{l}_{\bf{a}}$. So, the application of \eqref{ccc2} for $l'=n-l+1$ instead of $l$ confirms the announced statement. Now, by applying Lemma \ref{ccc1} for all the divisibility relations given by \eqref{ccc2} when $1\leq l\leq n$ and $n/2\leq k\leq n$, we derive that
\[\lcm\left\lbrace a_{l}\binom{n}{l}_{\bf{a}};~l=1,\dots,n\right\rbrace~\text{divides}~\gcd\left\lbrace \binom{n}{k}_{\bf{a}}\lcm\left(a_{1},\dots,a_{k}\right);~n/2\leq k\leq n\right\rbrace;\]
that is (according to Corollary \ref{R2}):
\[\lcm\left(a_{1},\dots,a_{n}\right)~\text{divides}~\gcd\left\lbrace\binom{n}{k}_{\bf{a}}\lcm\left(a_{1},\dots,a_{k}\right);~n/2\leq k\leq n\right\rbrace.\] 
The identity of Corollary \ref{R3} follows by observing that
\[\lcm\left(a_{1},\dots,a_{n}\right)=\binom{n}{n}_{\bf{a}}\lcm\left(a_{1},\dots,a_{n}\right)\in\left\lbrace\binom{n}{k}_{\bf{a}}\lcm\left(a_{1},\dots,a_{k}\right);~n/2\leq k\leq n\right\rbrace.\]
The proof of Corollary \ref{R3} is complete.
\end{proof}

Now, from Corollaries \ref{R2} and \ref{R3}, we derive significative and nontrivial effective estimations for the least common multiple of the first consecutive terms of some type of Lucas sequences. We have the following:
\begin{thm}\label{R4}
Let $P$ and $Q$ be two coprime non-zero integers such that $\Delta:=P^2-4Q>0$ and let $U\left(P,Q\right)$ be the associated Lucas sequence. Then, for any positive integer $n$, we have:
\begin{equation}\label{sh}
\left|\alpha\right|^{\frac{n^2}{4}-\frac{n}{2} - 1} \leq \lcm\left(U_1,U_2,\dots,U_n\right) \leq \left|\alpha\right|^{\frac{n^2}{3}+\frac{7n}{3}-\frac{8}{3}},
\end{equation}
where $\alpha$ is the largest root in absolute value of the quadratic equation $X^2-PX+Q=0$.
\end{thm}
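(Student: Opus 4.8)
The plan is to transport everything to the strong divisibility sequence $\boldsymbol{a}=(|U_n|)_{n\ge1}$ and then feed Corollaries~\ref{R2} and~\ref{R3} with sharp power-of-$|\alpha|$ estimates for the individual terms. First I would record that, since $P,Q$ are coprime, $\boldsymbol a$ is indeed a strong divisibility sequence, that $|U_n|\ge1$ for every $n\ge1$ (because $|\alpha|>|\beta|$ forces $U_n\ne0$), and that $\lcm(U_1,\dots,U_n)=\lcm(|U_1|,\dots,|U_n|)=:L_n$. Thus Corollaries~\ref{R2} and~\ref{R3} apply verbatim to $\boldsymbol a$.

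The engine of the whole argument is the two-sided estimate $|\alpha|^{m-2}\le|U_m|\le|\alpha|^m$, valid for all $m\ge1$, which I would extract from Binet's formula $|U_m|=\frac{|\alpha^m-\beta^m|}{\sqrt\Delta}$ by splitting on the sign of $Q=\alpha\beta$. If $Q>0$ the roots share a sign, so $\sqrt\Delta=|\alpha|-|\beta|$ and $|U_m|=\sum_{i=0}^{m-1}|\alpha|^{m-1-i}|\beta|^i$, which is at least $|\alpha|^{m-1}$ and at most $\frac{|\alpha|^m}{\sqrt\Delta}\le|\alpha|^m$. If $Q<0$ the roots have opposite signs, so $\sqrt\Delta=|\alpha|+|\beta|\ge\sqrt5>2$ (here $\Delta=P^2+4|Q|\ge5$); the upper bound then follows from $|\alpha^m-\beta^m|\le|\alpha|^m+|\beta|^m\le2|\alpha|^m<\sqrt\Delta\,|\alpha|^m$, while the lower bound $|U_m|\ge|\alpha|^{m-2}$ reduces, after using $|\beta|^m\le|\beta|^2|\alpha|^{m-2}$, to the clean inequality $|\alpha|-|\beta|\ge1$, which holds because $|\alpha|-|\beta|=|P|\ge1$. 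I expect this per-term estimate — in particular the lower bound for $Q<0$ — to be the most delicate point, so I would dispatch the degenerate small indices $m\le2$ separately via $|U_m|\ge1\ge|\alpha|^{m-2}$.

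For the lower bound on $L_n$ I would invoke Corollary~\ref{R2}, which gives $L_n\ge|U_k|\binom{n}{k}_{\boldsymbol a}$ for every $k$. After the cancellation $|U_k|\binom{n}{k}_{\boldsymbol a}=\frac{|U_n||U_{n-1}|\cdots|U_{n-k+1}|}{|U_1||U_2|\cdots|U_{k-1}|}$, I would bound the numerator below by $\prod|\alpha|^{j-2}$ and the denominator above by $\prod|\alpha|^j$; summing exponents collapses everything to $L_n\ge|\alpha|^{k(n-k-1)}$. Choosing $k=\lfloor n/2\rfloor$ and checking the two parities of $n$ shows $k(n-k-1)\ge\frac{n^2}{4}-\frac{n}{2}-1$, which is the asserted left-hand inequality (the small values of $n$ being trivial since the exponent is then negative).

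For the upper bound I would use Corollary~\ref{R3}: as $L_n$ is a gcd taken over $n/2\le k\le n$, it divides each member, whence $L_n\le\binom{n}{k}_{\boldsymbol a}L_k$ for every such $k$. Taking $k=\lceil n/2\rceil$ and bounding $\binom{n}{k}_{\boldsymbol a}$ from above (numerator by $\prod|\alpha|^j$, denominator by $\prod|\alpha|^{j-2}$) yields $\binom{n}{k}_{\boldsymbol a}\le|\alpha|^{k(n-k+2)}$, hence the halving recursion $L_n\le|\alpha|^{k(n-k+2)}L_k$. The quadratic ansatz $\log_{|\alpha|}L_n=cn^2$ forces $c=\tfrac14+\tfrac c4$, i.e.\ $c=\tfrac13$, which pins down the leading term; I would then prove $\log_{|\alpha|}L_n\le\frac{n^2}{3}+\frac{7n}{3}-\frac{8}{3}$ by induction on $n$, the inductive step reducing to a one-variable inequality once $k=\lceil n/2\rceil$ is substituted. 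The only real bookkeeping here is to carry the ceiling term and the base cases so that the linear and constant coefficients $\frac73$ and $-\frac83$ emerge exactly; conceptually the argument is complete once the two-sided per-term estimate and the two corollaries are in hand.
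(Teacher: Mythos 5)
Your per-term estimate $|\alpha|^{m-2}\le|U_m|\le|\alpha|^m$ (proved by splitting on the sign of $Q$ --- a correct alternative to the paper's Lemma \ref{ter}) and your lower bound are sound; indeed your exponent $k(n-k-1)$ is marginally sharper than the paper's $k(n-k-1)-1$, and $k=\lfloor n/2\rfloor$ finishes that half exactly as the paper does. The genuine gap is in the upper-bound induction. Write $T(n):=\frac{n^2}{3}+\frac{7n}{3}-\frac{8}{3}$ and $L_n:=\lcm(U_1,\dots,U_n)$. In the odd case $n=2m+1$ you take $k=\lceil n/2\rceil=m+1$ and the bound $\binom{n}{k}_{\boldsymbol a}\le|\alpha|^{k(n-k+2)}=|\alpha|^{(m+1)(m+2)}$, so the inductive step delivers
\begin{equation*}
\log_{|\alpha|}L_{2m+1}\;\le\;T(m+1)+(m+1)(m+2)\;=\;\frac{m^2+9m}{3}+m^2+3m+2\;=\;T(2m+1)+2,
\end{equation*}
overshooting the target by $2$ for every $m\ge 1$. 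The even case does close (it has slack $m/3$), but the odd case fails for \emph{all} odd $n$, so no finite list of base cases repairs it; nor does strengthening the hypothesis by a constant, since proving $\log_{|\alpha|}L_n\le T(n)-c$ would require $T(m+1)-c+(m+1)(m+2)\le T(2m+1)-c$, i.e.\ $2\le 0$. Moreover, no other admissible choice of $k$ helps: within your framework the deficit $T(k)+k(n-k+2)-T(n)$ is a concave function of $k$ minimized on $[n/2,n]$ at $k=m+1$, where it equals $2$.

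The missing idea is the symmetry step the paper uses. Since $k(n-k+2)$ is not symmetric under $k\mapsto n-k$ although $\binom{n}{k}_{\boldsymbol a}$ is, for $k>n/2$ one should bound the reflected coefficient: by identity \eqref{1}, $\binom{2m+1}{m+1}_{\boldsymbol a}=\binom{2m+1}{m}_{\boldsymbol a}$, whose numerator and denominator have only $m$ factors each, and your same computation then gives the exponent $m(m+3)=m^2+3m$ instead of $(m+1)(m+2)=m^2+3m+2$. This saving of $2$ closes the odd step exactly: $T(m+1)+m^2+3m=T(2m+1)$, with zero slack. The paper gains one further unit by using $|U_1|=1$ exactly (rather than $|U_1|\ge|\alpha|^{-1}$) in the denominator --- that is the $-1$ in its estimate \eqref{R42}, $\bigl|\binom{n}{k}_{\bf U}\bigr|\le|\alpha|^{k(n-k+2)-1}$ --- and so ends the odd step with exponent $\frac{4m^2}{3}+6m-1\le T(2m+1)$. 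Your plan is thus reparable by a one-line change, but as written the induction does not close for any odd $n$.
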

To prove this theorem, we need the following lemma:
\begin{lemma}\label{ter}
In the same situation with Theorem \ref{R4}, we have for any positive integer $n$:
\[\left|\alpha\right|^{n-2}\leq\left|U_n\right|\leq\left|\alpha\right|^{n}.\] 
\end{lemma}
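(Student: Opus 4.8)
The plan is to work directly from Binet's formula \eqref{s}, expressing everything through the ratio $r := \beta/\alpha$, which satisfies $|r| < 1$ since $|\alpha| > |\beta|$. Before starting the estimates I would record two elementary facts guaranteed by the hypotheses that $P$ and $Q$ are nonzero integers and that $\Delta = P^2 - 4Q$ is a positive integer. First, since $\alpha\beta = Q$ is a nonzero integer, $|\alpha|\,|\beta| = |Q| \ge 1$, which together with $|\alpha| > |\beta|$ forces $|\alpha| > 1$. Second, and crucially, $|\alpha| - |\beta| \ge 1$: if $\alpha$ and $\beta$ have the same sign then $|\alpha| - |\beta| = |\alpha - \beta| = \sqrt{\Delta} \ge 1$ (as $\Delta$ is a positive integer), whereas if they have opposite signs then $|\alpha| - |\beta| = |\alpha + \beta| = |P| \ge 1$ (as $P$ is a nonzero integer). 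This second fact is what powers both inequalities; note that coprimality of $P$ and $Q$ is not needed here.

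Rewriting Binet's formula as a geometric sum gives $U_n = \alpha^{n-1}\bigl(1 + r + \cdots + r^{n-1}\bigr)$, so that $|U_n| = |\alpha|^{n-1}\left|\sum_{i=0}^{n-1} r^i\right|$. For the \emph{upper bound} I would estimate the sum crudely: $\left|\sum_{i=0}^{n-1} r^i\right| \le \sum_{i=0}^{n-1}|r|^i < \frac{1}{1 - |r|} = \frac{|\alpha|}{|\alpha| - |\beta|} \le |\alpha|$, where the final step is exactly the fact $|\alpha| - |\beta| \ge 1$. Multiplying through by $|\alpha|^{n-1}$ yields $|U_n| < |\alpha|^n$, which is the desired right-hand inequality.

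The \emph{lower bound} is the delicate part and the main obstacle. Here I need $\left|\sum_{i=0}^{n-1} r^i\right| = \frac{|1 - r^n|}{|1 - r|} \ge |\alpha|^{-1}$, equivalently $|1 - r^n| \ge |1 - r|/|\alpha| = \sqrt{\Delta}/|\alpha|^2$. The difficulty is that $|1 - r^n|$ can be small when $|r|$ is close to $1$, that is, when the two roots are nearly equal in absolute value; this is precisely why the exponent $n - 2$ (rather than $n - 1$) is unavoidable, since the extra factor $|\alpha|^{-1}$ must absorb this loss. I would split according to the sign of $r$ (equivalently of $Q$). When $r > 0$ the quantity $1 - r^n$ is positive and increasing in $n$, so $|1 - r^n| \ge 1 - r = \sqrt{\Delta}/|\alpha| \ge \sqrt{\Delta}/|\alpha|^2$ (using $|\alpha| \ge 1$). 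When $r < 0$ the smallest value of $|1 - r^n|$ over $n \ge 1$ is attained at $n = 2$, giving $|1 - r^n| \ge 1 - |r|^2 = (1 - |r|)(1 + |r|) = \frac{(|\alpha| - |\beta|)(|\alpha| + |\beta|)}{|\alpha|^2} = \frac{|P|\,\sqrt{\Delta}}{|\alpha|^2} \ge \frac{\sqrt{\Delta}}{|\alpha|^2}$, where the last inequality again invokes $|P| \ge 1$. In both cases the required bound on $|1 - r^n|$ holds, so $|U_n| = |\alpha|^{n-1}\frac{|1 - r^n|}{|1 - r|} \ge |\alpha|^{n-2}$, completing the proof.
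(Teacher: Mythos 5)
Your proof is correct, and while it shares the paper's skeleton, the hard half is executed by a genuinely different mechanism. Like the paper, you start from Binet's formula and rest everything on the key inequality $|\alpha| - |\beta| \geq 1$, which you establish by exactly the same dichotomy ($|\alpha| - |\beta|$ equals $\sqrt{\Delta}$ or $|P|$ according to the signs of the roots); your upper bound is also the paper's upper bound in barely different clothing (infinite geometric series versus the finite sum $\frac{|\alpha|^n - |\beta|^n}{|\alpha| - |\beta|}$). The divergence is in the lower bound: the paper applies the reverse triangle inequality $|\alpha^n - \beta^n| \geq |\alpha|^n - |\beta|^n$, factors out $|\alpha| - |\beta|$, keeps only the two leading terms $|\alpha|^{n-1} + |\alpha|^{n-2}|\beta|$ of the resulting sum, and recognizes the quotient as $|\alpha + \beta| = |P| \geq 1$ --- an argument that requires $n \geq 2$ and a separate check for $n = 1$. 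You instead bound $|1 - r^n|$ below (with $r = \beta/\alpha$, whose sign is that of $Q$) by a case analysis: monotonicity of $1 - r^n$ when $r > 0$, and the minimum at $n = 2$ when $r < 0$. Your route buys two small things: it handles all $n \geq 1$ uniformly, and in the case $Q > 0$ it actually delivers the stronger bound $|U_n| \geq |\alpha|^{n-1}$, which is precisely the refinement that the paper states without proof in its first closing remark. The paper's route, in exchange, avoids any case analysis.
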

\begin{proof}
Let $\beta$ denote the second root of the quadratic equation $X^2-PX+Q=0$; so $|\beta|<|\alpha|$. We have that $|\alpha|-|\beta|\in\left\lbrace \alpha-\beta,\beta-\alpha,\alpha+\beta,-\alpha-\beta\right\rbrace$. But since $\alpha-\beta=\pm\sqrt{\Delta}$, $\alpha+\beta=P$ and $|\alpha|-|\beta|>0$, it follows that $|\alpha|-|\beta|\in\left\lbrace \left|P\right|,\sqrt{\Delta}\right\rbrace$. Then, since $P\in\mathbb{Z}^*$ and $\Delta\in\mathbb{Z}^{*}_{+}$, we deduce that:
\begin{equation}\label{supa1}
|\alpha|-|\beta|\geq 1.
\end{equation}
Using Formula \eqref{s} and \eqref{supa1}, we have for any positive integer $n$:
\begin{align*}
\left|U_n\right|&=\left|\frac{\alpha^n-\beta^n}{\alpha-\beta}\right|=\left|\beta^{n-1}\sum_{k=0}^{n-1}\left(\frac{\alpha}{\beta}\right)^{k}\right| \\ &\leq\left|\beta\right|^{n-1}\sum_{k=0}^{n-1}\left|\frac{\alpha}{\beta}\right|^{k}\\&=\frac{|\alpha|^n-|\beta|^n}{|\alpha|-|\beta|}\\&\leq|\alpha|^n-|\beta|^n\leq |\alpha|^n.
\end{align*}
On the other hand, we have for any integer $n\geq 2$:
\begin{align*}
\left|U_n\right| &= \left|\frac{\alpha^n-\beta^n}{\alpha-\beta}\right| = \frac{\left|\alpha^n-\beta^n\right|}{\left|\alpha-\beta\right|} \geq \frac{\left|\left|\alpha^n\right|-\left|\beta^n\right|\right|}{\left|\alpha-\beta\right|} = \frac{\left|\alpha\right|^n-\left|\beta\right|^n}{\left|\alpha-\beta\right|} \\
&= \frac{\left(|\alpha|-|\beta|\right)\left(|\alpha|^{n-1}+|\alpha|^{n-2}\cdot|\beta|+\dots+|\alpha|\cdot|\beta|^{n-2}+|\beta|^{n-1}\right)}{\left|\alpha-\beta\right|}\\&\geq \frac{\left(|\alpha|-|\beta|\right)\left(|\alpha|^{n-1}+|\alpha|^{n-2}\cdot|\beta|\right)}{\left|\alpha-\beta\right|}\\&=\left|\alpha+\beta\right|\cdot |\alpha|^{n-2}\\&=\left|P\right|\cdot |\alpha|^{n-2}\geq |\alpha|^{n-2}.
\end{align*}
By remarking that $\left|U_n\right|\geq |\alpha|^{n-2}$ is also valid for $n=1$ (since $U_1=1$ and $|\alpha|\geq |\alpha|-|\beta|\geq 1$), we conclude that for any positive integer $n$, we have:
\[|\alpha|^{n-2}\leq \left|U_n\right|\leq |\alpha|^{n},\]
as required. The lemma is proved.
\end{proof}

\begin{proof}[Proof of Theorem \ref{R4}]
Let $\beta$ denote the second root of the quadratic equation $X^2-PX+Q=0$; so $|\beta|<|\alpha|$. By applying the estimation of Lemma \ref{ter} for $n\geq 2$ and just replace $U_1$ by $1$, we immediately deduce that for all positive integers $n$ and $k$ such that $n\geq k$, we have: 
\begin{equation}\label{R42}
\left|\alpha\right|^{k\left(n-k-2\right)+1}\leq \left|\binom{n}{k}_{\bf{U}}\right|\leq \left|\alpha\right|^{k(n-k+2)-1}.
\end{equation}
First, let us show the left inequality of \eqref{sh}. For $n=1$, this inequality is trivial. Next, by using successively Corollary \ref{R2}, Lemma \ref{ter} and then \eqref{R42}, we have for any integer $n\geq 2$:    
\begin{align*}
\lcm\left(U_1,U_{2},\dots,U_{n}\right) &= \lcm\left\lbrace U_{1}\binom{n}{1}_{\bf{U}},U_{2}\binom{n}{2}_{\bf{U}},\dots,U_{n}\binom{n}{n}_{\bf{U}}\right\rbrace \\
&\geq \max_{1\leq k\leq n}\left\lbrace \left|U_{k}\right|\cdot\binom{n}{k}_{\bf{U}}\right\rbrace \\
&\geq \max_{1\leq k\leq n}|\alpha|^{k(n-k-1)-1}\\&\geq |\alpha|^{\left\lfloor \frac{n}{2}\right\rfloor\left(n-\left\lfloor \frac{n}{2}\right\rfloor-1\right)-1} \\
&= \left|\alpha\right|^{n^2/4 - n/2 - 1 + (n/2 - \lfloor n/2\rfloor) - (n/2 - \lfloor n /2\rfloor)^2} \\
&\geq \left|\alpha\right|^{n^2/4 -n/2 - 1} ~~~~~~~~~~ \text{(since $n/2 - \lfloor n/2\rfloor \in [0 , 1[$)},
\end{align*}
as required. The left inequality of \eqref{sh} is proved. Now, let us prove the right inequality of \eqref{sh}; that is $\lcm\left(U_1,U_{2},\dots,U_{n}\right)\leq \left|\alpha\right|^{\frac{n^2}{3}+\frac{7n}{3}-\frac{8}{3}}$ $(\forall n\geq 1)$. To do so, we argue by induction on $n$. For $n \in \{1 , 2 , 3\}$, we have:
\begin{align*}
\lcm(U_1 , U_2 , \dots , U_n) &= \lcm(U_2 , \dots , U_n) ~~~~~~~~~~ (\text{since } U_1 = 1) \\
&\leq \left|U_2 U_3 \cdots U_n\right| \\
&\leq {|\alpha|}^{2 + 3 + \dots + n} ~~~~~~~~~~ (\text{according to Lemma \ref{ter}}) \\
&= {|\alpha|}^{\frac{n^2 + n - 2}{2}} ,
\end{align*} 
which is stronger than what it is required. For $m \geq 2$, suppose that the right inequality of \eqref{sh} holds for any positive integer $n<2m$ and let us show that it also holds for $n = 2m$ and for $n = 2m+1$. By using successively Corollary \ref{R3}, the induction hypothesis and \eqref{R42}, we have:
\begin{align*}
\lcm\left(U_1,U_{2},\dots,U_{2m}\right)&\leq \lcm\left(U_1,U_{2},\dots,U_{m}\right)\cdot\left|\binom{2m}{m}_{\bf{U}}\right|\\&\leq |\alpha|^{\frac{m^2}{3}+\frac{7m}{3}-\frac{8}{3}}\cdot |\alpha|^{m^2+2m-1}\\&=|\alpha|^{\frac{4m^2}{3}+\frac{13m}{3}-\frac{11}{3}}\\&\leq |\alpha|^{\frac{\left(2m\right)^2}{3}+\frac{7\left(2m\right)}{3}-\frac{8}{3}},      
\end{align*}
as required. Similarly, we have:
\begin{align*}
\lcm\left(U_1,U_{2},\dots,U_{2m+1}\right)&\leq \lcm\left(U_1,U_{2},\dots,U_{m+1}\right)\cdot\left|\binom{2m+1}{m+1}_{\bf{U}}\right|\\&=\lcm\left(U_1,U_{2},\dots,U_{m+1}\right)\cdot\left|\binom{2m+1}{m}_{\bf{U}}\right|\\&\leq |\alpha|^{\frac{(m+1)^2}{3}+\frac{7(m+1)}{3}-\frac{8}{3}}\cdot |\alpha|^{m^2+3m-1}\\&=|\alpha|^{\frac{4m^2}{3}+6m-1}\\&\leq |\alpha|^{\frac{4m^2}{3}+6m}=|\alpha|^{\frac{(2m+1)^2}{3}+\frac{7(2m+1)}{3}-\frac{8}{3}}, 
\end{align*}
as required. This achieves this induction and confirms that the right inequality of \eqref{sh} is valid for any $n\geq 1$. The proof of the theorem is complete.
\end{proof}

\noindent\textbf{Remarks:}
\begin{enumerate}
\item In the situation of Theorem \ref{R4}, if the integers $P$ and $Q$ have some particular signs then the double inequality of Lemma \ref{ter} can be slightly improved. For example, if $Q>0$, we have that:
\[|\alpha|^{n-1}\leq\left|U_n\right|\leq |\alpha|^{n}~~~~(\forall n\geq 1).\] 
Also, if $P>0$ and $Q<0$, we have that:
\[|\alpha|^{n-2}\leq\left|U_n\right|\leq |\alpha|^{n-1}~~~~(\forall n\geq 1).\]
So, in these cases, by repeating the proof of Theorem \ref{R4} and using those new inequalities (instead of those of Lemma \ref{ter}), we slightly improve the result of Theorem \ref{R4}. Doing so for the usual Fibonacci sequence (which corresponds to $P=1>0$ and $Q=-1<0$), we obtain that for any positive integer $n$, we have:
\begin{equation}\label{sep}
\Phi^{\frac{n^2}{4}-\frac{9}{4}}\leq \lcm\left(F_1,F_2,\dots,F_n\right)\leq \Phi^{\frac{n^2}{3}+\frac{4n}{3}},
\end{equation}   
where $\Phi$ denotes the golden ratio ($\Phi:=\frac{1+\sqrt{5}}{2}$). The goodness of Estimation \eqref{sep} can be appreciated from the famous result of Matiyasevich and Guy \cite{Mat} stating that:
\[\lim_{n\longrightarrow +\infty}\frac{\log \lcm\left(F_1,F_2,\dots,F_n\right)}{n^2\log \Phi}=\frac{3}{\pi^2}.\]
This last result implies that if $\lambda_1,\mu_1,\eta_1,\lambda_2,\mu_2,\eta_2\in\mathbb{R}$ satisfy:
\[\Phi^{\lambda_1n^2+\mu_1n+\eta_1}\leq\lcm\left(F_1,F_2,\dots,F_n\right)\leq \Phi^{\lambda_2n^2+\mu_2n+\eta_2}~~~~\left(\forall n\geq 1\right)\] 
then we have necessary $\lambda_1\leq \frac{3}{\pi^2}$ and $\lambda_2\geq \frac{3}{\pi^2}$. Since \eqref{sep} corresponds to $\lambda_1=\frac{1}{4}=0.25$ and $\lambda_2=\frac{1}{3}=0.33\dots$ and since $\frac{3}{\pi^2}=0.303\dots$, we see that \eqref{sep} is good enough.
\item The results of Theorem \ref{R1} and corollaries \ref{R2} and \ref{R3} can be easily generalized to any other unique factorization domain $A$ instead of $\mathbb{Z}$ (we refer the reader to the article of Bliss et al. \cite{Bliss} for the general definition and properties of strong divisibility sequences in a unique factorization domain). If we take for example $A=\mathbb{Z}[q]$ and $\boldsymbol{a}=\left(a_n\right)_{n\geq 1}$ the sequence of polynomials of $\mathbb{Z}[q]$ defined by $a_n=[n]_{q}:=\frac{q^n-1}{q-1}$, we obtain Guo's identity \cite{Victor}:
\[\lcm\left\lbrace\binom{n}{0}_{q},\binom{n}{1}_{q},\dots,\binom{n}{n}_{q}\right\rbrace=\frac{\lcm\left([1]_q , [2]_q , \dots , [n+1]_q\right)}{[n+1]_{q}}~~~~\left(\forall n\geq 1\right),\]
where $[k]_{q}$ and $\binom{n}{k}_{q}$ $(0\leq k\leq n)$ are the standard notations in $q$-calculus; that is $[k]_{q} := \frac{q^k - 1}{q - 1}$ and $\binom{n}{k}_{q} := \frac{[n]_q [n - 1]_q \cdots [n - k + 1]_q}{[1]_q [2]_q \cdots [k]_q}$.
\item An alternative proof of Theorem \ref{R1} can be provided by investigating the $p$-adic valuation of the $\bf{a}$-binomial coefficients and using the generalized Legendre's formula given by Ward \cite{Ward}.  
\end{enumerate}

\end{document}